\newtheorem{theorem}{Theorem}[section]
\newtheorem{lemma}[theorem]{Lemma}
\newtheorem{conjecture}[theorem]{Conjecture}
\newtheorem{Question}[theorem]{Question}
\newtheorem{Strategy}[theorem]{Strategy}
\theoremstyle{definition}
\newcommand{\wS}{\hat{w}_i^S}
\newcommand{\q}{q}
\newcommand{\N}{N}
\newcommand{\R}{r}
\newcommand{\s}{l}
\newcommand{\E}{p}
\newcommand{\leaf}{u}
\newcommand{\iso}{u'}
\title{Bounds for the Competition-Independence game on trees}
\author{Jan Petr\footnote{\href{mailto:jp895@cam.ac.uk}{jp895@cam.ac.uk}, Department of Pure Mathematics and Mathematical Statistics (DPMMS), University of Cambridge, Wilberforce Road, Cambridge, CB3 0WA, United Kingdom} \and Julien Portier\footnote{\href{mailto:jp899@cam.ac.uk}{jp899@cam.ac.uk}, Department of Pure Mathematics and Mathematical Statistics (DPMMS), University of Cambridge, Wilberforce Road, Cambridge, CB3 0WA, United Kingdom} }
\date{}
\begin{document}

\maketitle

\begin{abstract}
In this paper we prove that Sweller has a strategy so that the Sweller-Start Competition-Independence game lasts at least $(5n+3)/13$ moves for every tree. Moreover, we show that there exist arbitrarily large trees such that the Sweller-Start Competition-Independence game lasts at most $(5n+26)/12$ moves, disproving a conjecture by Henning.
\end{abstract}

\section{Introduction}

\emph{Competition-independence games} were introduced by Phillips and Slater in 2001 in \cite{phillips2001introduction}, \cite{phillips2002graph}. With the notation of Goddard and Henning \cite{goddard2018competition}, the competition-independence game on a graph $G$ is played by two players, Diminisher (D) and Sweller (S). They take turns in constructing a maximal independent set $M$ of $G$. More precisely, the players alternate turns in which they choose a vertex that is not adjacent to any of the vertices already chosen by any of the 2 players until there is no such vertex. Upon completion of the game, the resulting set of chosen vertices is indeed a maximal independent set of $G$, and in particular a dominating set of $G$. The goal of Diminisher is to make the final set $M$ as small as possible and for Sweller to make the final set $M$ as large as possible.

Even though the competition-independence games had been introduced before 2010, when the domination games were first studied, a competition-independence game is a member of the family of the domination games. Those have received much attention, see e.g. \cite{brevsar2010domination, henning2016domination,kinnersley2013extremal,bujtas20221,versteegen2022proof, portier2023progress,henning2017_0.8,portier2022proof,bujtas2016transversal,alon2002game}, and \cite{brevsar2021domination} for a general survey.

For a graph G, let $I_s(G)$ denote the length of the competition-independence game if Sweller moves first and both players play optimally and let $I_d(G)$ denote the length of the competition-independence game if Diminisher moves first and both players play optimally. These numbers are called the competition-independence numbers. Phillips and Slater noticed \cite{phillips2002graph} that for a path on $n$ vertices we have $I_s(P_n)=\frac{3}{7}n+\theta(1)$ and wondered if this was the worst-case for Sweller among all trees. This prompted Henning \cite{henning2018my} to pose the following conjecture.

\begin{conjecture}\label{HenningConj}
Let $T$ be a tree of order $n$. Then $I_s(T) \geq \frac{3}{7}n$.
\end{conjecture}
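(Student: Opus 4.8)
The plan is to prove the lower bound by exhibiting an explicit strategy for Sweller that guarantees the game lasts at least $3n/7$ moves. Because the players jointly build a maximal independent set and the number of moves equals the size of the final set, it suffices to show that Sweller can force a maximal independent set of size at least $3n/7$. I would track the game through three vertex statuses: \emph{free} (not yet dominated, hence still playable), \emph{dominated} (adjacent to the chosen set but not in it, hence unplayable), and \emph{chosen}. The game ends exactly when no free vertex remains. Sweller's guiding idea should be to play \emph{economically}: each of his moves should dominate as few new vertices as possible, so that many moves are needed to exhaust the free vertices. The cheapest possible move chooses a free vertex that is a leaf of the forest induced on the free vertices, dominating only itself and one neighbour.

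First I would set up a potential (weight) function $w$ on the vertices, depending on their status and on the local structure of the free forest, normalised so that the total initial weight equals $n$ and the final weight is $0$. The target invariant is that across each Diminisher--Sweller pair of moves the total weight drops by at most $14/3$, i.e. at most $7/3$ per move on average; this immediately yields at least $n/(7/3) = 3n/7$ moves. To make such an amortisation possible, the weights must penalise ``dangerous'' configurations: a free vertex with many free neighbours lets Diminisher dominate many vertices at once, so its surroundings should already carry reduced weight (via a discharging rule) to prevent Diminisher's move from decreasing the potential by more than its fair share.

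Next I would specify Sweller's response rule precisely: after each Diminisher move, Sweller plays a free vertex minimising the number of newly dominated vertices, which in a tree can always be taken to be a leaf or an isolated vertex of the residual free forest. Here the tree hypothesis is essential: the subgraph induced on the free vertices is always a forest, so a cheap move for Sweller always exists while any free vertex remains. I would then argue inductively that Sweller's repeated ``trimming'' of the free forest prevents Diminisher from finding high-free-degree vertices too often, so that expensive Diminisher moves are rare and are paid for by the slack accumulated from Sweller's cheap moves.

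The hard part, and the step I expect to be the decisive obstacle, is establishing the amortised bound of $7/3$ per move \emph{uniformly}. A single Diminisher move can dominate arbitrarily many free vertices when a high-free-degree vertex is available, while Sweller's one economical reply recovers only a bounded amount; the whole argument rests on showing that the tree structure cannot sustain many such expensive Diminisher moves and that the potential's slack always suffices to cover them. I expect this balance to be extremely delicate, and a careful accounting may reveal that the slack is \emph{not} always enough to reach the constant $3/7$ --- the natural obstruction being trees assembled from many ``broom'' or ``spider'' gadgets, on which Diminisher can repeatedly dominate more than $7/3$ vertices per move while Sweller's leaf-trimming cannot keep pace. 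Determining whether the amortisation truly closes at $7/3$, or only at some weaker ratio, is therefore the crux on which the entire proof stands or falls.
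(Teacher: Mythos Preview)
The fundamental problem is that you are attempting to \emph{prove} a statement that the paper \emph{disproves}. Conjecture~\ref{HenningConj} is false: the paper constructs, for every $k$, a tree $T_k$ on $n=2(3k+1)$ vertices with $I_s(T_k)\le \tfrac{5}{12}n+\tfrac{13}{6}$, and since $\tfrac{5}{12}<\tfrac{3}{7}$ this violates the conjectured bound for all sufficiently large $n$. No potential-function argument can close at the ratio $3/7$, because the inequality simply does not hold.

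You in fact anticipated the obstruction yourself. The ``broom'' or ``spider'' gadgets you flag as the likely failure point are precisely what the paper exploits: $T_k$ is built from two spiders with $k$ legs of length $3$ joined at their centres. On this tree Diminisher can occupy a centre and then repeatedly play the middle vertex of untouched legs, so that roughly half of the legs in one spider contribute only one move each rather than two. The amortised cost per move is then about $12/5>7/3$, and your proposed invariant of a drop of at most $14/3$ per pair of moves is genuinely violated. The paper's actual lower bound, proved by a greedy strategy with a carefully tuned linear potential, is $I_s(T)\ge \tfrac{5n+3}{13}$; the true optimal constant lies somewhere in $[5/13,\,5/12]$.
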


In the first part of this paper, we disprove \Cref{HenningConj} by showing the following result.

\begin{theorem}\label{Example512}
There exist trees $T$ of size arbitrarily large satisfying $I_s(T) \leq \frac{5n+26}{12}$.
\end{theorem}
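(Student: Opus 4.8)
The plan is to exhibit an explicit family of trees $(T_k)_{k\ge 1}$ with $|V(T_k)| = 12k + c$ for a fixed small constant $c$, on which Diminisher (the minimizing player) has a strategy that forces the Sweller-start game to terminate after at most $5k + O(1)$ moves; substituting $n = |V(T_k)|$ then yields $I_s(T_k) \le \frac{5n+26}{12}$. Each $T_k$ is built from $k$ vertex-disjoint copies $H_1,\dots,H_k$ of a fixed \emph{gadget} $H$, which is a carefully chosen tree on exactly $12$ vertices carrying a distinguished \emph{link vertex} $r$, glued together by a minimal connecting structure (for instance one new vertex $z$ joined to $r_i$ in every copy, or the links $r_1,\dots,r_k$ arranged along a path) so that $T_k$ is a tree and each copy $H_i$ meets the rest of $T_k$ only at $r_i$. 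The point of $H$ is that Diminisher can guarantee that at most $5$ of its $12$ vertices are ever chosen; the bounded connecting structure and a single parity correction account for the additive constant $26$.

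The heart of the proof is a local lemma about $H$, together with a copy-by-copy \emph{response} strategy for Diminisher. Consider the play restricted to one copy $H_i$. At any moment $r_i$ may or may not already have been dominated ``from the outside'' (through $z$ or the connecting path), and Sweller is free either to open inside $H_i$ or to play elsewhere. I would have Diminisher maintain the rule: if Sweller plays a still-unneutralized copy $H_i$, Diminisher immediately answers with a prescribed vertex of the same $H_i$; if Sweller plays in the connecting structure or inside an already-neutralized copy, Diminisher plays a fixed harmless default move. The gadget must be designed so that for every legal opening move of Sweller inside $H$ there is a Diminisher reply after which a short case analysis shows that no more than $5$ vertices of that copy can be selected for the rest of the game, whatever happens at $r$ and in whatever order the players revisit the copy. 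Since Diminisher always answers inside the copy Sweller just touched, each copy ends up receiving a balanced share of moves; one then sums the per-copy bound $5$ over the $k$ copies, adds the $O(1)$ vertices contributed inside the connecting structure, and accounts for the at most one Sweller move left unanswered at the end of the game (Sweller moving first). Letting $k\to\infty$ and reading off $n$ gives trees of unbounded order with $I_s(T_k)\le\frac{5n+26}{12}$, disproving \Cref{HenningConj} since $\tfrac{5}{12}<\tfrac37$.

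The main obstacle is designing the $12$-vertex gadget $H$ and verifying the local ``$\le 5$ picks'' property; the rest is bookkeeping. One needs a tree that is at the same time (i) \emph{efficient} for Diminisher --- for each possible Sweller opening inside $H$, Diminisher has a reply that immediately closes off most of the $12$ vertices --- and (ii) \emph{robust} --- the bound $5$ must survive regardless of whether $r$ is dominated from outside early, late, or never, and regardless of the interleaving of moves between this copy and the rest of $T_k$. Pushing the ratio down to exactly $5$ per $12$ (rather than $6$ per $12$, which would not beat the path ratio $3/7$) is delicate: $H$ has to be only marginally more rigid than a path of the same length, so the case analysis of Sweller's openings has little slack. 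I would also have to check the edge cases of the response strategy: that Diminisher always has a legal move available (providing a harmless default when no local reply makes sense), that a copy once neutralized truly cannot pick up further vertices, and that Sweller cannot exploit the connecting structure to force many extra selections --- which is the reason to keep that structure as small as possible.
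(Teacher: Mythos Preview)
Your plan has the right target ratio $5/12$ and the right high-level shape (replicated gadgets plus a local response strategy for Diminisher), but the proof is not there: the entire argument rests on the existence of a $12$-vertex tree $H$ with the ``at most $5$ picks however the outside behaves'' property, and you neither construct $H$ nor verify that property. You yourself flag this as ``the main obstacle,'' and it is a genuine one --- getting a per-gadget bound of exactly $5$ out of $12$ against an adversarial Sweller, robust to whether $r$ is dominated from outside and to arbitrary interleaving, is precisely the nontrivial content of the theorem. Until a concrete $H$ and the accompanying case analysis are written down, this is a strategy outline rather than a proof.

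It is also worth knowing that the paper avoids the $12$-vertex gadget search entirely with a much simpler construction. It takes two copies of the ``broom'' $S_k$ (a central vertex with $k$ pendant $P_3$'s attached at an endpoint) and joins them by an edge between the two centers, giving a tree on $n=6k+2$ vertices. Diminisher's first move secures one of the two centers; on the side whose center is played, each $P_3$ can contribute only one chosen vertex, while on the other side Diminisher repeatedly plays the middle of an untouched $P_3$, guaranteeing that roughly half of those $P_3$'s contribute one vertex and the rest at most two. A two-line count then gives $I_s(T_k)\le \tfrac{5}{12}n+\tfrac{13}{6}=\tfrac{5n+26}{12}$. So the $5/12$ ratio comes from a \emph{global} asymmetry between the two halves rather than from a single replicated gadget, and the case analysis you anticipate is replaced by an almost immediate counting argument.
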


Goddard and Henning \cite{goddard2018competition} proved that, for any tree $T$ on $n$ vertices and maximum degree at most 3, we have $I_s(T)\geq 3n/8$. In the second part of our paper, we prove that a larger lower bound holds for every tree.

\begin{theorem}\label{linearLB}
Let $T$ be a tree of order $n$. Then $I_s(T) \geq \frac{5n+3}{13}$.
\end{theorem}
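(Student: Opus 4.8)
The plan is to prove a lower bound of the form $I_s(T) \geq \frac{5n+3}{13}$ by designing a strategy for Sweller and tracking a weight/potential function. Since $I_s(T)$ is the length of the game under optimal play, it suffices to exhibit any Sweller strategy guaranteeing at least $(5n+3)/13$ moves. The natural approach is the ``discharging along the game'' method common in domination-game arguments: assign to every vertex an initial weight (here, plausibly $5/13$ so the total is $5n/13$, with the $+3/13$ a boundary correction), and show that each completed \emph{round} of play (a Diminisher move followed by a Sweller move, or a single move near the end) removes at most $2 \cdot \frac{13}{5}$ units of weight in a suitable accounting, so that the number of rounds is at least $n/2 \cdot \frac{5}{13}$ up to additive error; the factor-$2$ slack is what Sweller's strategy must exploit. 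More precisely I would let the ``residual weight'' of a configuration be a sum over the vertices not yet dominated and not yet removed, with coefficients depending on their status (free, dominated-but-available, etc.), and prove an invariant: after Sweller's reply the residual weight has dropped by at most some fixed amount $c$, while it started at roughly $n$ and must reach $0$, giving length $\geq n/(2c) + O(1)$ rounds, hence $I_s(T) \geq \frac{5n}{13} + O(1)$; one then checks the constant is exactly $\frac{5n+3}{13}$.

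Concretely, the key steps would be: (1) Root $T$ and set up a \emph{reduction calculus} on rooted subtrees — identify a small list of ``configurations'' (e.g. a leaf, a path of length $2$ or $3$ hanging off the root, a ``spider leg'' of a given short length) that can appear as pendant structures, and for each specify how Sweller should respond when Diminisher plays inside it or adjacent to it. (2) Prove the core local lemma: in any such local interaction, the number of vertices that become ``used up'' (chosen or dominated) per pair of moves is at most $\frac{26}{5}$ in the worst case, with the extremal ratio $26/5$ realized only by a specific gadget (this is why $5/13$ and not $3/7$ or $3/8$ — the paper is presumably finding the true worst-case tree family, which by Theorem \ref{Example512} sits between $5/13$ and $5/12$). (3) Handle Diminisher's ``skipping'' options: Diminisher might play in a far-away part of the tree to avoid helping Sweller, so the argument must be robust to an adversarial move anywhere, which is handled by showing Sweller always has a \emph{local} good reply that bounds the damage regardless of where Diminisher played. (4) Sum the local bounds over the whole game and clean up the additive constant, tracking the last (possibly unpaired) move carefully.

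The main obstacle I anticipate is step (2) combined with the adversarial nature of step (3): one needs a potential function refined enough that \emph{every} Diminisher move — including moves that isolate vertices, moves that dominate several leaves of a star at once, and moves deep in a long path — provably costs at most the budgeted amount, while Sweller's reply is chosen to keep the potential from collapsing. In domination games this is typically done via a carefully chosen ``weight'' assignment to vertices that depends on a classification of the current graph into constantly many vertex types; finding the right types and the right weights (the $1, \tfrac{5}{13}$-style rational coefficients) so that all transitions verify the inequality is the technical heart, and usually requires a finite but tedious case analysis over the moves available to Diminisher near the structure Sweller is maintaining. A secondary subtlety is ensuring the strategy is well-defined throughout: Sweller's prescribed reply vertex must always be legal (non-adjacent to all chosen vertices), which forces the configuration list to be closed under the moves it permits — proving this closure, essentially an invariant maintenance argument, is where I would expect to spend the most care. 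I would end by verifying the additive constant: starting potential $\frac{5n}{13}$ (or $\frac{5n+3}{13}$ after the boundary fix), each round decreases it by at most $\frac{5 \cdot 2}{13} = \frac{10}{13}$ worth in the relevant normalization, and the game ends when the potential is exhausted, yielding exactly the claimed bound.
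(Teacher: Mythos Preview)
Your framework is sound in spirit --- a potential that starts near $n$, drops by a bounded amount per round, and must reach $0$ --- but there is a concrete gap in step (2). You propose to bound the number of vertices ``used up'' per pair of moves by $26/5$; this is false as stated. If the residual forest contains a star on $d+1$ vertices, Diminisher can play its centre and remove all $d+1$ vertices in a single move, so no constant bound on vertices-per-round is available. Any potential that is merely a weighted sum over vertex ``statuses'' (undominated, dominated, etc.) in the Bujt\'as style will run into this: a move at a high-degree vertex changes arbitrarily many statuses at once. You acknowledge the need for refinement but do not supply the missing ingredient, and the rooted-subtree ``reduction calculus'' you sketch is speculative and not closed under Diminisher's high-degree moves.

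The paper's resolution is to make the potential depend not only on vertices but on \emph{edges} and \emph{isolated vertices}: the quantity tracked per move is $m(u)=(1-\beta)k(u)+v(u)-\alpha e(u)$ with $\alpha=3/8$, $\beta=13/8$, where $v,e,k$ are the vertices removed, edges removed, and increment in isolated vertices. The point is that in a forest a move removing many vertices necessarily removes many edges, so the $-\alpha e$ term tames the star example; summing over the game, $\sum v = n$, $\sum e = n-C$, and $\sum k\le 0$, which is where the additive $3/13$ (from the $C=1$ edge count) actually comes from, not from a boundary correction. Sweller's strategy is simply the global greedy rule ``play any vertex minimising $m$'', with no rooting or configuration list; the work is a case analysis (on the degree of Diminisher's vertex and the number of further edges it meets) showing that near any Diminisher move there is some vertex $\hat w$ with $m(\hat w)+m(w^D)\le 2\beta$, whence the greedy choice also satisfies this. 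One more small point: in the Sweller-start game the rounds are Sweller-then-Diminisher, not Diminisher-then-Sweller as you wrote; the argument exploits this order by comparing Sweller's actual move to a hypothetical alternative chosen with hindsight of Diminisher's reply.
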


\section{The upper bound}

This section is devoted to proving the following result, which indeed immediately implies \Cref{Example512}.

\begin{theorem}
For every $k \in \mathbb{N}$, there exists a tree $T_k$ on $n=2 \cdot (3k+1)$ vertices such that $I_s(T_k) \leq \frac{5}{12}n+\frac{13}{6}$.
\end{theorem}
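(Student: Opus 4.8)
The plan is to construct an explicit tree $T_k$ on $n = 2(3k+1)$ vertices and exhibit an explicit strategy for Diminisher that keeps the game short. The natural candidate for $T_k$ is a ``spider-like'' or ``caterpillar-like'' gadget built from $k$ copies of a small tree (plus a constant-size connector), since the vertex count $2(3k+1) = 6k+2$ suggests $k$ blocks of $6$ vertices with a remainder of $2$. A promising block is a path or a small subtree on $3$ vertices together with a pendant/support structure, arranged so that when Diminisher plays the ``centre'' of a block, a large portion of that block is immediately dominated and thus removed from play. The first step is therefore to pin down $T_k$ precisely and record its order, its structure as $k$ identical pieces glued along a central path or at a central vertex, and the key local fact that a single well-chosen Diminisher move in a fresh block kills (dominates) enough vertices of that block.

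Second, I would set up the strategy for Diminisher. The idea is a \emph{pairing/response strategy}: Diminisher maintains the invariant that after each of his moves, the ``damage'' is localized, and he responds to Sweller's move in block $i$ by playing the optimal dominating vertex of that same block (or of the unique not-yet-touched block if Sweller plays in the connector). Because the game alternates with Sweller first, Diminisher always gets to ``clean up'' after Sweller. The core counting step is a potential/amortized argument: assign to each block a budget counting how many vertices of $M$ can end up inside it, and show that over the two moves (one by Sweller, one by Diminisher) devoted to a block, at most a bounded number $c$ of vertices get added to $M$ while at least $6$ vertices of the block get removed from play. Summing over the $k$ blocks and adding the $O(1)$ contribution of the connector yields $I_s(T_k) \le \frac{c}{6} n + O(1)$, and the construction should be tuned so that $c = 5$, giving the $\frac{5}{12} n$ main term; the additive constant $\frac{13}{6}$ then comes from carefully handling the connector, the last incomplete block, and parity/boundary effects.

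The main obstacle I anticipate is \textbf{verifying that Diminisher's local response is globally sound} — that is, that responding inside Sweller's block never leaves Diminisher ``stuck'' with a move in a fresh block that Sweller could exploit, and that Sweller cannot gain by playing in a block already partially processed or by playing in the connector to desynchronize the pairing. This requires a careful case analysis of the possible ``states'' of a block (untouched, opened by Sweller, opened by Diminisher, closed) and a proof that from every reachable state the invariant is preserved. A secondary technical point is getting the additive constant exactly right: one must check the endgame (when fewer than a full pair of moves remains, or when the connector forces an extra move) contributes no more than $\frac{13}{6}$ beyond $\frac{5}{12}n$, which is a finite but fiddly computation. I would organize the write-up as: (1) definition of $T_k$ and its basic properties; (2) statement of Diminisher's strategy and the block-state invariant; (3) the local lemma bounding additions vs.\ removals per block; (4) the global summation and constant-chasing to conclude $I_s(T_k) \le \frac{5}{12}n + \frac{13}{6}$.
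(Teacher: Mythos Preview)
Your proposal is a plan, not a proof, and the plan has a genuine internal inconsistency. You never specify the tree $T_k$ beyond ``spider-like or caterpillar-like,'' and you never specify Diminisher's strategy beyond ``respond in the same block.'' More seriously, your amortized count does not make sense as written: you say that ``over the two moves (one by Sweller, one by Diminisher) devoted to a block, at most a bounded number $c$ of vertices get added to $M$,'' and then you want $c=5$. But every move adds \emph{exactly one} vertex to $M$, so two moves add exactly two vertices, forcing $c=2$; that would give $I_s(T_k)\le n/3+O(1)$, which is too strong and indeed false in general (already paths have $I_s(P_n)\approx 3n/7$). If instead you meant that the total number of moves ever played inside a $6$-vertex block is at most $5$, then your ``two moves per block'' framing and the whole pairing picture collapse, and you are back to needing a concrete tree and a concrete argument for why the blocks cannot absorb many moves.

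The paper's proof avoids all of this by being completely explicit and \emph{not} using a pairing/response strategy. The tree $T_k$ is two spiders glued at their centres: take $k$ copies of $P_3$, attach one endpoint of each to a centre vertex to form $S_k$, and join two copies of $S_k$ by an edge between the two centres. Diminisher's strategy is global, not local: on his first move he plays a centre vertex (possible unless Sweller already did). This single move has a huge effect: in the copy $S_k^1$ whose centre is played, every arm becomes a pendant $P_2$ and will host exactly one further move; in the other copy $S_k^2$ the centre is dominated and the $k$ arms become $k$ disjoint $P_3$'s. Thereafter Diminisher ignores where Sweller plays and simply picks the middle vertex of any still-untouched $P_3$ in $S_k^2$, guaranteeing that at least $\lfloor (k-1)/2\rfloor$ of those $P_3$'s host only one move. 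A direct tally then gives $I_s(T_k)\le 2+\lfloor(k-1)/2\rfloor+2\lceil(k+1)/2\rceil+k\le 3+5k/2=\tfrac{5}{12}n+\tfrac{13}{6}$. The missing idea in your plan is precisely this first ``global'' Diminisher move that simultaneously neutralises an entire half of the tree; a block-by-block response cannot achieve this effect.
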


\begin{proof}
Let $S_k$ be the graph formed by $k$ paths on $3$ vertices, each connected by one of its endpoints to a central vertex. The tree $T_k$ is formed by joining two copies of $S_k$ by their central vertices.

\begin{figure}[htbp]\centering
    			\includegraphics[height=4cm]{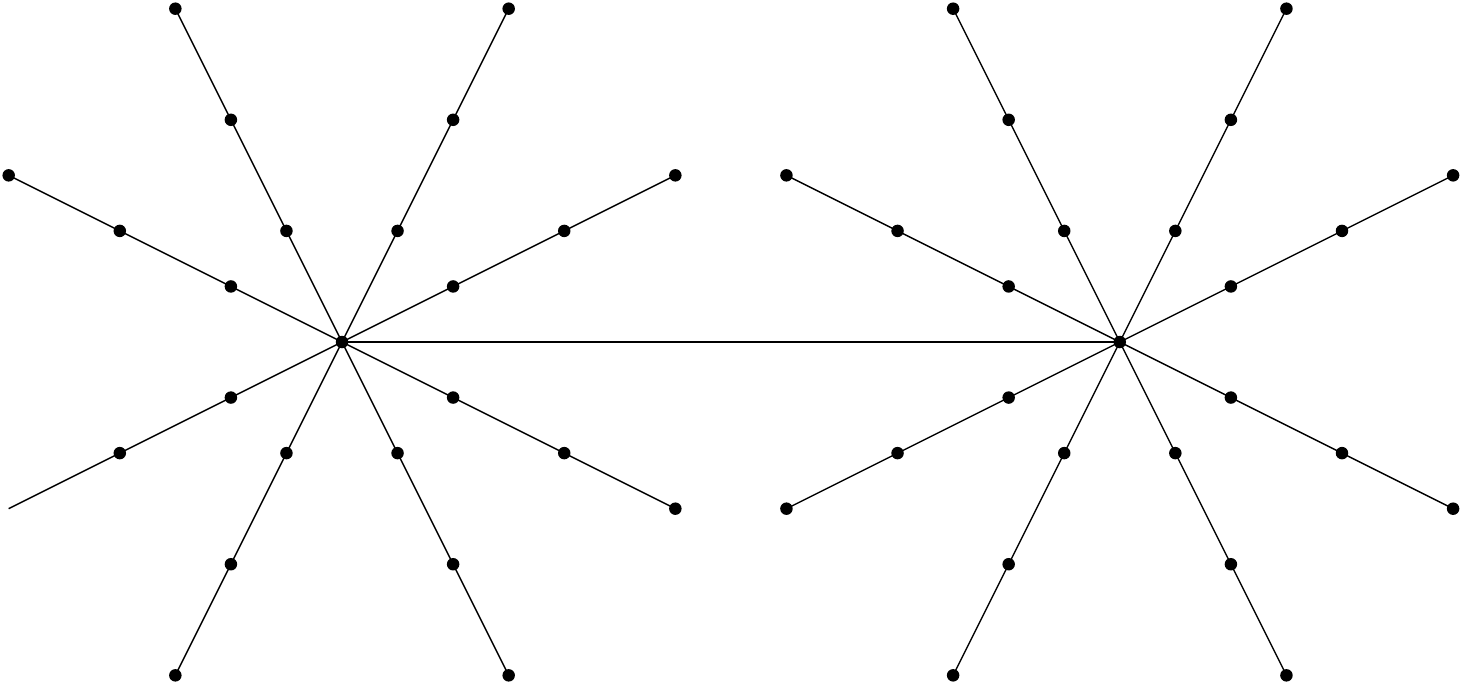}
    			\caption{The tree $T_k$.}
\label{Comp_Ind_G}
	\end{figure}

We describe here a strategy for Diminisher such that the Sweller-Start competition-independence game on $T_k$ finishes after at most $3+5k/2$ moves. If Sweller plays for its first move one of the central vertices, then Diminisher plays any legal move for its first move. Otherwise, one of the central vertices is a legal move, which Diminisher plays for its first move. Let $S^1_k$ be the copy of $S_k$ where the central vertex has been played and $S^2_k$ be the copy of $S_k$ where the central vertex has not been played. For the next moves, we instruct Diminisher to play the middle vertex on each of the paths of $S^2_k$ where no player has played yet, thus making sure only one move is played on these paths throughout the whole game. It is clear that Diminisher is able to play at least $\lfloor\frac{k-1}{2}\rfloor$ of those moves. On the remaining paths in $S^2_k$ at most two moves will be played throughout the game, and on each path of $S^1_k$ exactly one move will be played throughout the game.

In total, we have $I_s(T_k)\leq 2+\lfloor\frac{k-1}{2}\rfloor+2\cdot \lceil\frac{k+1}{2}\rceil+k \leq3+\frac{5}{2}k =\frac{5}{12}n+\frac{13}{6}$.
\end{proof}

\section{The lower bound}

We actually prove a slightly stronger result, which obviously implies \Cref{linearLB}.

\begin{theorem}\label{linearLBForest}
Let $F$ be a forest of order $n$ with $C$ connected components. Then $I_s(F) \geq \frac{5n+3C}{13}$.
\end{theorem}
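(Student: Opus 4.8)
The plan is to prove the forest statement $I_s(F) \ge (5n+3C)/13$ by induction on $n$, where Sweller plays a carefully chosen first move and thereafter we charge each move of the game to a bounded chunk of vertices. The natural potential to track is $\Phi(F) = 5n + 3C$, and the target says the game lasts at least $\Phi(F)/13$ moves; equivalently, we want to show that over the course of the game, the players together "consume" at most $13$ units of potential per move on average. Since Sweller wants the game long and moves first and last often, the strategy should be: Sweller picks a leaf-type move that removes as little potential as possible (deleting a small closed neighbourhood), forcing the remaining structure to still have large potential relative to the number of remaining moves. Concretely, I would look for a vertex $\leaf$ that is a leaf (or lies in a short pendant path) such that playing $\leaf$ and deleting $N[\leaf]$ from $F$ produces a forest $F'$ with $\Phi(F) - \Phi(F') \le 13 + \text{(something absorbed by the pairing with Diminisher's reply)}$, and then pair up Sweller's move with Diminisher's forced reply to amortise.

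The key steps, in order, would be: (1) Set up the induction on $n$; handle the base cases ($n$ small, e.g. $n \le $ a constant, and the case where $F$ has an isolated vertex or a component that is a single edge or a path on few vertices) directly. (2) Reduce to trees with no "useless" leaves — more precisely, if $F$ has a component which is a single vertex, or if after some reductions the constant term $3C$ makes the bound trivially true, dispose of it. (3) In the main case, take a longest path in a component of $F$ and let $\leaf$ be an endpoint; let $\E$ be its neighbour (the penultimate vertex on the path). Consider the support structure at $\E$: because the path is longest, every neighbour of $\E$ other than the next path vertex is a leaf. Instruct Sweller to play $\leaf$ (or possibly $\E$, whichever is better). (4) Analyse Diminisher's reply: whatever Diminisher plays, removing its closed neighbourhood from $F \setminus N[\leaf]$ yields a forest $F''$, and after Sweller's and Diminisher's two moves we have removed some number $t$ of vertices and changed the component count by some amount; we need $\Phi(F) - \Phi(F'') \le 26$ together with $I_s(F'') \ge (\text{game continues as Sweller-start on } F'')$ — being slightly careful because after two moves it is again Sweller's turn, so the inductive hypothesis applies directly to $F''$. (5) Combine: $I_s(F) \ge 2 + I_s(F'') \ge 2 + \Phi(F'')/13 \ge 2 + (\Phi(F) - 26)/13 = \Phi(F)/13$, closing the induction.

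The main obstacle is Step (4): controlling Diminisher's reply. Diminisher will try to play a high-degree vertex to delete as many vertices as possible (maximising potential consumed) while also possibly disconnecting a component into many pieces — but note that disconnecting \emph{raises} $3C$, which helps Sweller, so Diminisher actually wants to delete many vertices \emph{without} creating many new components, i.e.\ play near a leaf-heavy spot. The worst case to rule out is when $\E$ is a high-degree support vertex with many leaf-children: then Sweller playing $\leaf$ removes only $2$ vertices ($\leaf$ and potentially nothing else if $\deg(\leaf)=1$, so just a closed neighbourhood of size $2$), but Diminisher could then play $\E$'s other neighbour on the path and clean up, or Sweller might instead be forced to a worse configuration. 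Handling the star-like and spider-like local configurations — possibly by having Sweller play the support vertex $\E$ itself rather than the leaf $\leaf$ when the support vertex has many leaves, thereby only removing $\deg(\E)+1$ vertices but killing one component cheaply relative to its size — and checking the fractional bookkeeping $\Phi(F)-\Phi(F'') \le 26$ holds in \emph{every} such configuration, including the parity/floor issues when the "$+3C$" slack must be spent, is where the real work lies. I would expect the proof to hinge on a short case analysis over the isomorphism type of the bottom two or three levels of a longest path, with one or two genuinely tight cases driving the constant $13$.
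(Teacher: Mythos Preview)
Your inductive scheme with the potential $\Phi(F) = 5n + 3C$ and target drop $\leq 26$ per move-pair is natural, but Step~(4) has a genuine gap that cannot be closed by any local case analysis at the end of a longest path. The problem is that Sweller commits to a move before Diminisher replies, and Diminisher is under no obligation to play anywhere near Sweller's move. Take $F = P_m \cup K_{1,l}$ with $m$ large and $l \geq 2$: your Sweller plays a leaf of the long path, so $\Phi$ drops by $10$; Diminisher then plays the centre of the star, erasing that whole component, so $\Phi$ drops by $5(l+1)+3 = 5l+8$. The total drop is $5l+18 > 26$ already for $l=2$, and the induction step fails. Worse, one can plant many disjoint stars in $F$, so no single clever Sweller move can defuse them all before Diminisher reaches one. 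No refinement of the longest-path heuristic, and no case analysis ``over the isomorphism type of the bottom two or three levels,'' can help: the obstruction is that your per-pair budget must hold against a Diminisher move that is structurally unrelated to Sweller's, and your framework gives Sweller no way to anticipate it.

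The paper's proof avoids this precisely by \emph{not} fixing Sweller's move in advance of Diminisher's. Sweller instead plays greedily throughout, at each turn minimising $m(u) = (1-\beta)k(u) + v(u) - \alpha e(u)$ with $\alpha = 3/8$, $\beta = 13/8$; up to scaling this is your $\Phi$ augmented by a term $-5k(u)$ rewarding the creation of isolated vertices. The key lemma is then proved \emph{retrospectively}: given Diminisher's actual $i$-th move $w_i^D$, one exhibits a candidate $\hat w_i^S$ in the vicinity of $w_i^D$ (often a neighbour of $w_i^D$, sometimes $w_i^D$ itself) that Sweller \emph{could} have played so that the pair $(\hat w_i^S, w_i^D)$ respects the budget $2\beta$; since the greedy choice $w_i^S$ is by definition at least as good as $\hat w_i^S$, the budget holds for the real pair too. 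Summing over all rounds and using $\sum v_i = n$, $\sum e_i = n - C$, $\sum k_i \leq 0$ yields $N \geq (5n+3C)/13$. The crucial point is that the comparison move $\hat w_i^S$ is allowed to depend on $w_i^D$, which is exactly what your inductive framing forbids.
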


The rest of this section is devoted to proving \Cref{linearLBForest}.

\subsection{The strategy and notation}

The competition-independence game can be rephrased as follows. Starting with a graph $G$, the players take turns, alternatively removing vertices together with their neighbourhoods. The game ends when the resulting graph is empty. Diminisher's goal is to make the game as short as possible, whereas Sweller's goal is to make it last as long as possible.

Considering global properties of graphs, a candidate for a heuristically good move for Sweller

\begin{itemize}
    \item removes only few vertices, and also, with the aim to make future moves remove fewer vertices,
    \item removes many edges (so that neighbourhoods get smaller), and
    \item creates many isolated vertices (as they will need to be played at some point, not removing any more vertices).
\end{itemize}

The following paragraphs are devoted to introducing Sweller's strategy for Sweller-start competition game on a forest $F$ on $C$ components, together with notation we shall use. Let $K(G)$ stand for the number of isolated vertices in a graph $G$. With the convention of $F_D^{0}=F$, let $F_i^D$ be the forest after Diminisher's $i$-th move. For any $u \in V(F_{i-1}^D)$, we set:

\begin{itemize}
    \item $v_i^S(u)=|N[u]|$, that is, the number of vertices that get removed if Sweller chooses $u$ for its move,
    \item $e_i^S(u)=|E(F_{i-1}^D)|-|E(F_{i-1}^D-N[u])|$, that is, the number of edges that get removed if Sweller chooses $u$ for its move, and
    \item $k_i^S(u)=K(F_{i-1}^D-N[u])-K(F_{i-1}^D)$, that is, the difference between the number of isolated vertices in the resulting forest if Sweller chooses $u$ and in $F_{i-1}^D$.
\end{itemize}

We instruct Sweller to follow a simple greedy strategy:

\begin{Strategy}
Sweller picks any vertex $w$ that minimizes the function $m_i^S: V(F_{i-1}^D) \rightarrow \mathbb{R}$, with $m_i^S(u)=(1-\beta)k_i^S(u) + v_i^S(u) - \alpha e_i^S(u)$, where $\alpha=3/8$ and $\beta=13/8$.
\end{Strategy}

Suppose Sweller follows this greedy strategy and consider any sequence of Diminisher's replies. Let $\N$ be the number of moves the game lasts. We will ultimately show $\N\geq(5n+3C)/13$. Note that if $\N$ is even, Diminisher plays the last move, whereas if $\N$ is odd, the last move is played by Sweller. We set $\R=\lfloor \frac{\N}{2} \rfloor$, the number of moves played by Diminisher.

Let $w_i^S$ be the vertex Sweller chooses for its $i$-th move and let $F_i^S$ be the forest after Sweller's $i$-th move, that is, $F_{i}^S=F_{i-1}^D-N[w_i^S]$. Furthermore, let $w_i^D$ be the $i$-th move of Diminisher, then $F_{i}^D=F_i^S-N[w_i^D]$. Analogously to $v_i^S, e_i^S, k_i^S$ and $m_i^S$, for any vertex $u\in V(F_i^S)$ we define:

\begin{itemize}
    \item $v_i^D(u)=|N[u]|$,
    \item $e_i^D(u)=|E(F_i^S)|-|E(F_i^S-N[u])|$,
    \item $k_i^D(u)=K(F_{i}^S-N[u])-K(F_{i}^S)$,
\end{itemize}
and $m_i^D: V(F_{i}^S) \rightarrow \mathbb{R}$ as $m_i^D(u)=(1-\beta)k_i^D(u) + v_i^D(u) - \alpha e_i^D(u)$.

It will be convenient for us to consider changes to the forest after both players make their move. In accordance with this, for all $i\leq \R$ we set:

\begin{itemize}
    \item $v_i=|V(F_D^{i-1})|-|V(F_D^{i})|$, that is, the number of vertices that get removed by both players during their $i$-th moves,
    \item $e_i=|E(F_D^{i-1})|-|E(F_D^{i})|$, that is, the number of edges that get removed by both players during their $i$-th moves, and
    \item $k_i=|K(F_D^{i})|-|K(F_D^{i-1})|$, that is, the increment in number of isolated vertices over the $i$-th moves of both players.
\end{itemize}
Finally, we set $m_i=(1-\beta)k_i + v_i - \alpha e_i=m_i^S(w_i^S)+m_i^D(w_i^D)$.

\subsection{Key lemma}

This subsection is devoted to proving the following lemma, which gives a bound on all $m_i$. The proof of \Cref{linearLBForest} straightforwardly follows, as we show in the next subsection.

Recall that $\R=\lfloor \frac{N}{2} \rfloor$ is the number of moves played by Diminisher.

\begin{lemma}\label{StrategyS}
For every $i \leq \R$, we have

$$2\beta \geq (1-\beta)k_i+v_i-\alpha e_i.$$

Moreover, if $\N$ is odd, we have

$$\beta \geq (1-\beta)k_{\R+1}^S(w_{\R+1}^S)+v_{\R+1}^S(w_{\R+1}^S)-\alpha e_{\R+1}^S(w_{\R+1}^S).$$

\end{lemma}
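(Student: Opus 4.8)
The plan is to analyze the combined effect of one Sweller move followed by one Diminisher reply, bounding $m_i = m_i^S(w_i^S) + m_i^D(w_i^D)$ by $2\beta$, and separately to handle a trailing Sweller move when $\N$ is odd. The crucial leverage is that Sweller plays \emph{greedily}: for every vertex $u$ available on her turn, $m_i^S(w_i^S) \le m_i^S(u)$. So the strategy is to exhibit, for each position, a specific candidate vertex $u$ whose value $m_i^S(u)$ we can bound by $\beta$; then Sweller's actual move is at least as good. After Sweller's move the forest $F_i^S$ is a (possibly larger-component-count) forest, and I claim the same kind of bound $m_i^D(w_i^D') \le \beta$ holds for a well-chosen candidate $u'$ there too — but Diminisher is the adversary, so for the Diminisher half we cannot assume she is cooperative; instead we must show that \emph{no matter what} Diminisher does, $m_i^D(w_i^D) \le \beta$. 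Since $m_i^D(u) = (1-\beta)k_i^D(u) + v_i^D(u) - \alpha e_i^D(u)$ with $1-\beta = -5/8 < 0$, removing a vertex and neighbourhood can only change these quantities in bounded ways relative to each other, and the key point will be an inequality of the form $v_i^D(u) - \alpha e_i^D(u) - (\beta-1)k_i^D(u) \le \beta$ valid for \emph{every} vertex $u$ in any forest. This last universal inequality, together with greedy optimality for Sweller, yields the two displayed bounds.

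The key structural input is a local case analysis on the component of $u$ (for Sweller's candidate) or on an arbitrary vertex $u$ (for the Diminisher bound). For a vertex $u$ of degree $d$ in a forest: $v(u) = d+1$, $e(u) = d + (\text{edges from } N(u) \text{ to } V \setminus N[u])$, and $k(u)$ counts neighbours of $u$ whose only neighbour was $u$ (these become isolated) minus any isolated vertices destroyed (none, since isolated vertices have no neighbours) — so $k(u) \ge 0$ always, in fact $k(u)$ equals the number of leaves adjacent to $u$ that are not in $N[u]$... more carefully, $k(u)$ is the number of vertices at distance exactly $2$ from $u$ that become isolated, minus nothing, so $k(u)\ge 0$. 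With $\alpha = 3/8$, $\beta = 13/8$, the target universal inequality $(d+1) - \tfrac38 e(u) - \tfrac58 k(u) \le \tfrac{13}{8}$, i.e. $8(d+1) \le 13 + 3e(u) + 5k(u)$, should follow by checking that edges and newly-isolated vertices are plentiful enough; the worst case is presumably a star or a long path configuration, where one balances $e(u) \ge d$ against the deficit. For the combined bound $m_i \le 2\beta$ one simply adds the Sweller estimate ($\le \beta$ via greedy choice of a good candidate) and the Diminisher estimate ($\le \beta$ via the universal inequality applied in $F_i^S$).

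The main obstacle I anticipate is choosing Sweller's candidate vertex $u$ so that $m_i^S(u) \le \beta$ genuinely holds in \emph{every} forest $F_{i-1}^D$ — in particular when the forest has only short components (isolated vertices, single edges, paths $P_3$), where a move necessarily removes several vertices without killing edges or creating many isolated vertices, pushing $m_i^S$ up. One expects Sweller to prefer a \emph{leaf} (or the support vertex of a leaf, or a degree-$2$ vertex on a long path), and the heuristic bullet points — few vertices removed, many edges removed, many isolated vertices created — are precisely tuned so that $\alpha$ and $\beta$ make the arithmetic close out. The delicate point is that when the available forest is a disjoint union of isolated vertices only, any move has $v = 1, e = 0, k = 0$, giving $m_i^S(u) = 1 < \beta = 13/8$, which is fine; the tight cases will be something like a single $P_2$ or $P_3$, and verifying that the induced component-type case analysis never violates $\le \beta$ (resp. $\le 2\beta$ for the pair of moves) is where the real work lies. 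I would organize the proof as: (i) record $v, e, k$ formulas and the sign of $1-\beta$; (ii) prove the universal per-vertex inequality $m(u) \le \beta$ by casing on the local structure (leaf, support vertex, interior vertex, and the handful of tiny components) — this simultaneously bounds the Diminisher half and provides Sweller's candidate; (iii) add the two halves for the $i \le \R$ statement; (iv) for odd $\N$, the final move is Sweller's and is bounded directly by (ii) via greedy optimality, giving the $\beta$ bound on $m_{\R+1}^S(w_{\R+1}^S)$.
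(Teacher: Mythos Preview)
Your plan hinges on a universal per-vertex inequality $m(u)\le\beta$ in every forest, and this is false. Take $u$ to be the centre of a path $P_3$ (or any star $K_{1,d}$ with $d\ge 2$): then $v(u)=3$, $e(u)=2$, $k(u)=0$, so $m(u)=3-\tfrac{3}{8}\cdot 2=\tfrac{9}{4}>\tfrac{13}{8}=\beta$. More generally, for the centre of $K_{1,d}$ one gets $m(u)=\tfrac{5d}{8}+1$, which is unbounded. Since $F_i^S$ can perfectly well contain such a star, Diminisher's move can have $m_i^D(w_i^D)$ much larger than $\beta$, and the decomposition $m_i\le\beta+\beta$ collapses. (Incidentally, your claim $k(u)\ge 0$ also fails when $u$ is itself isolated: then $k(u)=-1$.)

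The paper does not bound the two halves separately. Instead it \emph{couples} them: given Diminisher's actual move $w_i^D$ with parameters $(\s,\E)$ in $F_i^S$, one exhibits a candidate $\hat w_i^S$ \emph{near} $w_i^D$ in $F_{i-1}^D$ (typically a neighbour $y_j$ of $w_i^D$ with few outside edges, found by pigeonhole) for which the \emph{sum} $m_i^S(\hat w_i^S)+m_i^D(w_i^D)\le 2\beta$; greedy optimality of Sweller then gives $m_i\le 2\beta$. The point is that when Diminisher's contribution is large (say a big star), the nearby Sweller candidate has correspondingly small $m_i^S$ because it removes $w_i^D$ and kills many edges while isolating many of the $y_l$'s. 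The actual verification is a case split on $(\s,\E)$ using Lemmas~\ref{keylemma}--\ref{SpecialCases}. Your argument for the trailing odd move is salvageable (a leaf or isolated vertex is always available and does give $m\le\beta$), but the paired-move bound needs this coupling idea, not a uniform per-vertex estimate.
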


To prove \Cref{StrategyS}, we shall show that given Diminisher's $i$-th move $w_i^D$ Sweller could have picked a vertex $\wS$ "near" $w_i^D$ for its $i$-th move so that $2 \beta \geq m_i^S(\wS)+m_i^D(w_i^D)$. As $m_i^S(\wS) \geq m_i^S(w_i^S)$, the lemma follows.

We shall make a distinction of cases based on $v_i^D(w_i^D)$ and $e_i^D(w_i^D)$. Let $y_1,\ldots,y_{\s}$ be the neighbours of $w_i^D$ in $F_i^S$, so that $v_i^D(w_i^D)=1+\s$. Let $\E$ stand for the number of edges in $F_i^S-w_i^D$ with at least one endpoint among $\{y_1,\ldots,y_{\s}\}$, then $e_i^D(w_i^D)=\s+\E$.

\begin{lemma}\label{keylemma}
Let $i \leq r$ and suppose $v_i^D(w_i^D)=1+\s$ and $e_i^D(w_i^D)=\s+\E$ for some $\s \geq 1$. Then there is $\wS \in V(F_{i-1}^D)$ such that
\begin{itemize}
\item either $v_i^S(\wS)=\q+2$, $e_i^S(\wS) \geq \q+\s$ and $k_i^S(\wS) \geq \s-\E-2$ for some $\q \leq \lfloor \frac{\E}{\s} \rfloor$,
\item or $v_i^S(\wS)=\q+3$, $e_i^S(\wS) \geq \q+\s+2$ and $k_i^S(\wS) \geq \s-\E-1$ for some $\q \leq \lfloor \frac{\E}{\s} \rfloor$.
\end{itemize}
\end{lemma}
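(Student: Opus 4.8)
The plan is to exhibit a good vertex $\wS$ for Sweller by looking in the immediate vicinity of Diminisher's move $w_i^D$, specifically among the neighbours $y_1,\ldots,y_\s$ of $w_i^D$. The key observation is that since $\E$ counts edges touching the set $\{y_1,\ldots,y_\s\}$ in $F_i^S-w_i^D$, by averaging there is some $y_j$ incident to at most $\lfloor \E/\s\rfloor$ such edges; among those $y_j$ of smallest degree, pick one, and then further look at its neighbourhood in $F_i^S - w_i^D$. Since $F_i^S$ is a forest, the edges incident to the $y_j$'s form a forest as well, so counting is clean. Set $\q$ to be (roughly) the number of neighbours of the chosen $y_j$ other than $w_i^D$, so $\q \le \lfloor \E/\s\rfloor$.

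Next I would analyse what happens when Sweller plays $\wS$, where $\wS$ is either the chosen $y_j$ itself or — in the second case of the lemma — a vertex adjacent to $y_j$ one step further out (this is what produces the ``$+1$'' in $v_i^S$ and the sharper edge/isolated-vertex counts). First I compute $v_i^S(\wS) = |N[\wS]|$ directly: it is $\q+2$ when $\wS = y_j$ (namely $y_j$, $w_i^D$, and the $\q$ other neighbours), and $\q+3$ in the shifted case. For $e_i^S(\wS)$, removing $N[\wS]$ deletes the $\s$ edges at $w_i^D$ (since $w_i^D \in N[\wS]$), the $\q$ edges from $y_j$ to its far neighbours, and possibly more; this gives $e_i^S(\wS) \ge \q + \s$, respectively $\ge \q+\s+2$. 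The subtle quantity is $k_i^S(\wS)$: after removing $N[\wS]$, the vertices $y_1,\ldots,y_\s$ other than $y_j$ all lose their connection to $w_i^D$, so each such $y_t$ either becomes isolated or was already carrying few edges; the deficit is controlled precisely by $\E$ (the total edge budget on the $y$'s) and by the at most one or two edges ``spent'' near $y_j$. Bounding the number of $y_t$'s that fail to become isolated by $\E$ (resp.\ $\E+1$ after accounting for the edge structure near the shifted vertex) yields $k_i^S(\wS) \ge \s - \E - 2$, resp.\ $\ge \s-\E-1$.

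The case split between the two bullets should correspond to whether the chosen $y_j$ has a neighbour outside $\{w_i^D\} \cup \{$leaves$\}$ whose removal is ``worth it'' — i.e.\ whether shifting $\wS$ one step away from $w_i^D$ nets an extra isolated vertex and two extra removed edges at the cost of one extra removed vertex. Concretely: if every far-neighbour of $y_j$ is itself a leaf (degree $1$ in $F_i^S$), playing $y_j$ directly isolates nothing new beyond the $y_t$'s and we land in the first case; otherwise some far-neighbour $z$ of $y_j$ has another neighbour, and playing $\wS = y_j$ still works but a careful accounting — or alternatively choosing $\wS$ among those far neighbours — gives the second bullet. I would organise this as: (a) fix $y_j$ by the averaging argument and set $\q$; (b) handle the sub-case where $\q = 0$ separately (then $y_j$ is a leaf-neighbour of $w_i^D$ and the bounds are immediate); (c) for $\q \ge 1$, decide between the two bullets by examining whether the $\q$ far-neighbours are all leaves.

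The main obstacle I anticipate is the bookkeeping for $k_i^S(\wS)$: one must carefully track which of the $\s-1$ remaining neighbours $y_t$ of $w_i^D$ become genuinely isolated after removing $N[\wS]$, and the answer depends on edges among the $y$'s and from the $y$'s to vertices \emph{not} in $N[w_i^D]$ — all of which are lumped into the single parameter $\E$. The forest structure is essential here (it prevents double-counting and guarantees the local picture near $y_j$ is a tree), but getting the constants ($-\E-2$ versus $-\E-1$, and matching them to $v_i^S \in \{\q+2,\q+3\}$) right is where the real care is needed. Everything else — the averaging step and the edge counts — is routine once the right $\wS$ is identified.
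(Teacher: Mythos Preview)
Your averaging step to pick a neighbour $y_j$ of $w_i^D$ with at most $\q \le \lfloor \E/\s \rfloor$ further edges is exactly right, and so is the choice $\wS = y_j$. The genuine gap is in where the two-case split comes from. The functions $v_i^S, e_i^S, k_i^S$ are evaluated in $F_{i-1}^D$, the forest \emph{before} Sweller's $i$-th move, whereas your information about $w_i^D$ and the $y_t$'s lives in $F_i^S$. So although $y_j$ has exactly $1+\q$ neighbours in $F_i^S$, it may have strictly more in $F_{i-1}^D$: Sweller's actual move $w_i^S$ could have deleted a neighbour of $y_j$. Acyclicity of $F_{i-1}^D$ bounds the excess by one (two extra neighbours of $y_j$ in $N[w_i^S]$ would give a $4$-cycle through $y_j$ and $w_i^S$). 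The paper's two bullets are precisely these two sub-cases: either $|N_{F_{i-1}^D}[y_j]| = \q+2$ (no extra neighbour), or $|N_{F_{i-1}^D}[y_j]| = \q+3$ (one extra neighbour $x$ adjacent to $w_i^S$, and the two bonus edges $y_jx$, $xw_i^S$ account for the ``$+2$'' in the edge count). In both cases $\wS = y_j$; no shift ``one step further out'' is needed or wanted.

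Your proposed split --- based on whether the far-neighbours $a_1,\dots,a_\q$ of $y_j$ are leaves in $F_i^S$ --- does not recover the stated dichotomy. For instance if $\q=0$ but $y_j$ had a neighbour deleted by $w_i^S$, then $v_i^S(y_j)=3$, contradicting your claim that $v_i^S(y_j)=\q+2$. And moving $\wS$ to a far-neighbour of $y_j$ would in general lose the crucial inclusion $w_i^D \in N[\wS]$, so the $\s$ edges at $w_i^D$ are no longer guaranteed to be removed and the bound $e_i^S(\wS) \ge \q+\s$ collapses. Once you realise the split is about $F_{i-1}^D$ versus $F_i^S$, the isolated-vertex count also falls out: among the $\s-1$ vertices $y_t$ with $t\ne j$, at most $\E$ carry an edge in $F_i^S - w_i^D$ and (again by acyclicity) at most one can be adjacent to the extra vertex $x$, which gives the $\s-\E-2$ and $\s-\E-1$ bounds directly.
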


\begin{proof}
Let $y_1, \dots, y_{\s}$ be the neighbours of $w_i^D$ in $F_i^S$. Then by pigeonhole principle and the fact that $F_i^S$ is acyclic, there exists a vertex $y_j$ that is adjacent in $F_i^S$ to exactly $1+\q$ of the edges removed during Diminisher's $i$-th move for some $\q \leq \lfloor \frac{\E}{\s} \rfloor$. Let $y_jw_i^D$, $y_ja_1$, $\dots$, $y_ja_{\q}$ be those edges. First, note that $y_j$ has at most $2+\q$ neighbours in $F_{i-1}^D$, as otherwise $w_i^S$ would have at least $2$ common neighbours with $y_j$ in $F_{i-1}^D$, which is impossible since $F_{i-1}^D$ is acyclic.

If $y_j$ has exactly $1+\q$ neighbours in $F_{i-1}^D$, then $\wS=y_j$ was a possible Sweller's $i$-th move removing $\q+2$ vertices $y_j$, $w_i^D$, $a_1$, $\dots$, $a_{\q}$ and at least the $\q+\s$ edges $y_ja_1$, $\dots$, $y_ja_{\q}$, $y_1w_i^D$, $\dots$, $y_{\s}w_i^D$ and making at least $\s-\E-2$ vertices isolated, as at least $\s-\E-2$ of the $y_l$'s with $l \neq j$ each have $w_i^D$ as their only neighbour in $F_{i-1}^D$, leading to the first outcome of the lemma.

If $y_j$ has exactly $2+\q$ neighbours in $F_{i-1}^D$, then $w_i^S$ has exactly one common neighbour with $y_j$ in $F_{i-1}^D$, say $x$. Then $\wS=y_j$ was a possible Sweller's $i$-th move removing the $\q+3$ vertices $w_i^S$, $y_j$, $x$, $a_1$, $\dots$, $a_q$ and at least the $\q+\s+2$ edges $y_jx$, $w_i^Sx$, $y_ja_1$, $\dots$, $y_ja_{\q}$, $y_1w_i^D$, $\dots$, $y_{\s}w_i^D$ and making at least $\s-\E-1$ vertices isolated, as at least $\s-\E-1$ of the $y_l$'s with $l \neq j$ each have $w_i^D$ as their only neighbour in $F_{i-1}^D$, leading to the second outcome of the lemma.
\end{proof}

\begin{lemma}\label{copymove}
Let $i \leq \R$ and suppose $v_i^D(w_i^D)=1+\s$ and $e_i^D(w_i^D)=\s+\E$  for some $\s \geq 1$. Then there is $\wS \in V(F_{i-1}^D)$ such that
\begin{itemize}
\item either $v_i^S(\wS)=\s+1$ and $e_i^S(\wS) \geq \E+\s$,
\item or $v_i^S(\wS)=\s+2$ and $e_i^S(\wS) \geq \E+\s+2$.
\end{itemize}
\end{lemma}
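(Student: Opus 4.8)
The idea is that Sweller can essentially "copy" Diminisher's move, playing $w_i^D$ itself (or a vertex adjacent to it), thereby removing the same number of edges while controlling the number of removed vertices. First I would observe that $w_i^D$ is a legal move for Sweller in $F_{i-1}^D$: indeed $F_{i-1}^D$ is a supergraph of $F_i^S$ on the same vertex set minus $N_{F_{i-1}^D}[w_i^S]$, so every vertex surviving into $F_i^S$ was available to Sweller. Thus I would first try $\wS = w_i^D$. The subtlety is that $w_i^D$ may have more neighbours in $F_{i-1}^D$ than in $F_i^S$, since Sweller's move might have deleted vertices adjacent to $w_i^D$. Because $F_{i-1}^D$ is acyclic and $w_i^S$ has a connected closed neighbourhood, $w_i^D$ can have \emph{at most one} neighbour in $F_{i-1}^D$ that is not a neighbour in $F_i^S$ — if it had two, say $z_1,z_2 \in N[w_i^S]$, then $w_i^D z_1$, $w_i^D z_2$ together with a path from $z_1$ to $z_2$ inside $N[w_i^S]$ would give a cycle.

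This gives exactly the two cases. If $w_i^D$ has the same neighbourhood $\{y_1,\dots,y_\s\}$ in $F_{i-1}^D$ as in $F_i^S$, then $\wS=w_i^D$ removes $v_i^S(\wS)=\s+1$ vertices and removes at least the $\s+\E$ edges incident to $N[w_i^D]$ in $F_i^S$ (these edges are still present in $F_{i-1}^D$ since none of them is incident to $N[w_i^S]$, as all their endpoints survive into $F_i^S$), giving the first outcome. If instead $w_i^D$ has exactly one extra neighbour $x \in N[w_i^S]$ in $F_{i-1}^D$, then $\wS=w_i^D$ removes $v_i^S(\wS)=\s+2$ vertices ($w_i^D$, $y_1,\dots,y_\s$, and $x$) and removes at least the $\s+\E$ edges as before \emph{plus} the edge $w_i^D x$ \emph{plus} at least one further edge incident to $x$: since $x \in N[w_i^S]$ and $w_i^S \notin V(F_i^S)$, but actually the cleanest way to locate a second new edge at $x$ is that $x$ lies in the connected closed neighbourhood of $w_i^S$ in $F_{i-1}^D$, so $x$ has a neighbour on the $w_i^S$-side; that edge is incident to $N[w_i^D]$ in $F_{i-1}^D$ but absent from $F_i^S$. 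Hence $e_i^S(\wS) \ge \s+\E+2$, the second outcome. I should double-check the degenerate possibility $x = w_i^S$ itself or $x$ adjacent to some $y_\ell$, but in all such subcases one still finds the requisite extra edge incident to $\{x\}\cup\{y_1,\dots,y_\s\}$ in $F_{i-1}^D$ that was killed by Sweller's actual move.

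The main obstacle I anticipate is precisely this bookkeeping of the "extra" edges in the second case: one must argue carefully that passing from $F_i^S$ back to $F_{i-1}^D$ restores at least \emph{two} edges incident to $N_{F_{i-1}^D}[w_i^D]$, not just the single edge $w_i^D x$. The acyclicity of $F_{i-1}^D$ and the connectedness of $N_{F_{i-1}^D}[w_i^S]$ are the two structural facts that make this work, and I would state them as a preliminary observation before splitting into cases. Everything else is a direct count, entirely parallel to the edge-counting already carried out in the proof of Lemma~\ref{keylemma}.
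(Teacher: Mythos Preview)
Your approach is correct and is exactly the paper's: set $\wS = w_i^D$ and split according to whether $w_i^D$ has $l+1$ or $l+2$ neighbours in $F_{i-1}^D$, using acyclicity to rule out more than one extra neighbour. One small remark: your worry about the degenerate case $x = w_i^S$ is moot, since $w_i^D \in V(F_i^S)$ forces $w_i^D \notin N_{F_{i-1}^D}[w_i^S]$, so the extra neighbour $x$ is necessarily a \emph{proper} neighbour of $w_i^S$ and the edge $xw_i^S$ is automatically the second new edge you need.
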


\begin{proof}
Set $\wS=w_i^D$. Then, in the same spirit as the proof of the previous lemma, distinguishing upon $w_i^D$ having $1+\s$ or $2+\s$ neighbours in $F_{i-1}^D$, we get each of the two outcomes claimed in the lemma.
\end{proof}

The next lemma is a refinement of the two previous lemmas for small values of $\s$ and $\E$.

\begin{lemma}\label{SpecialCases}
Suppose $v_i^D(w_i^D)=1+\s$ and $e_i^D(w_i^D)=\s+\E$.

\begin{enumerate}
\item If $\s=2$ and $\E=0$, then there is $\wS \in V(F_{i-1}^D)$ such that
\begin{itemize}
\item either $v_i^S(\wS)=2$, $e_i^S(\wS) \geq 2$ and $k_i^S(\wS) \geq 1$,
\item or $v_i^S(\wS)=3$, $e_i^S(\wS) \geq 4$ and $k_i^S(\wS) \geq 1$.
\end{itemize}
\item If $\s=3$ and $\E=0$, then there is $\wS \in V(F_{i-1}^D)$ such that $v_i^S(\wS)=2$, $e_i^S(\wS) \geq 3$ and $k_i^S(\wS) \geq 1$. 
\item If $\s=3$ and $\E=1$, then there is $\wS \in V(F_{i-1}^D)$ such that \begin{itemize}
\item either $v_i^S(\wS)=2$, $e_i^S(\wS) \geq 3$ and $k_i^S(\wS) \geq 1$,
\item or $v_i^S(\wS)=3$, $e_i^S(\wS) \geq 5$ and $k_i^S(\wS) \geq 1$. 
\end{itemize}
\item If $\s=2$ and $\E=1$ or $3 \leq \s=\E \leq 4$, then there is $\wS \in V(F_{i-1}^D)$ such that $v_i^S(\wS)=2$ and $e_i^S(\wS) \geq 2$.
\item If $\s=1$ and $\E=0$, then there is $\wS \in V(F_{i-1}^D)$ such that $v_i^S(\wS)=2$ and $e_i^S(\wS) \geq 1$.
\item If $\s=0$ and $\E=0$, then there is $\wS \in V(F_{i-1}^D)$ such that 
\begin{itemize}
\item either $v_i^S(\wS)=2$ and $e_i^S(\wS) \geq 1$,
\item or $v_i^S(\wS)=1$ and $k_i^S(\wS)=-1$.
\end{itemize}
\end{enumerate}
\end{lemma}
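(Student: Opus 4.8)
The plan is to extract a single structural fact about the neighbourhood of $w_i^D$ and then read off all six cases from it essentially for free.

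First I would fix notation: write $\cC$ for the connected component of $w_i^D$ in $F_i^S$, so that $y_1,\dots,y_{\s}$ (the neighbours of $w_i^D$ in $F_i^S$) all lie in $\cC$, and observe that in every case~(1)--(6) the hypotheses on $\s$ and $\E$ pin $\cC$ down to a very small tree (a path $y_1\,w_i^D\,y_2$ in~(1), a star in~(2), an edge in~(5), a single vertex in~(6), and so on). The key claim I would prove is: \emph{at most one vertex of $\cC$ has a neighbour in $V(F_{i-1}^D)\setminus V(F_i^S)$; if such a vertex $\leaf_0$ exists then it has exactly one such neighbour $z_0$, and $z_0\in N(w_i^S)$, so that $z_0\notin V(\cC)$ and $z_0 w_i^S\in E(F_{i-1}^D)$.} The proof is a short acyclicity argument: $V(F_{i-1}^D)\setminus V(F_i^S)=N[w_i^S]$ and no vertex of $\cC\subseteq V(F_i^S)$ is adjacent to $w_i^S$, so any neighbour of $\cC$ outside $V(\cC)$ lies in $N(w_i^S)$; two such edges, from $a,a'\in V(\cC)$ to $z,z'\in N(w_i^S)$, would close up, via the $a$--$a'$ path inside $\cC$ and the length-two detour $z\,w_i^S\,z'$, into a cycle of $F_{i-1}^D$. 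Two consequences I would record for repeated use: a leaf of $\cC$ other than $\leaf_0$ is still a leaf in $F_{i-1}^D$, and $\deg_{F_{i-1}^D}(x)\ge\deg_{\cC}(x)$ for every $x\in V(\cC)$, with equality unless $x=\leaf_0$.

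Next I would set up two template moves for Sweller, refining the ideas of \Cref{keylemma} and \Cref{copymove}. Calling a neighbour of $w_i^D$ that is a leaf of $\cC$ a \emph{pendant}: Move~P picks $\wS$ to be a pendant $y_j\ne\leaf_0$, which removes exactly $\{y_j,w_i^D\}$, so $v_i^S(\wS)=2$, $e_i^S(\wS)=\deg_{F_{i-1}^D}(w_i^D)\ge\s$, and $k_i^S(\wS)$ is the number of pendants other than $y_j$ and $\leaf_0$; Move~Q (available when $\leaf_0$ is itself a pendant) picks $\wS=\leaf_0$, which removes $\{\leaf_0,w_i^D,z_0\}$, so $v_i^S(\wS)=3$, while the $\s$ edges at $w_i^D$ together with $\leaf_0 z_0$ and $z_0 w_i^S$ are $\s+2$ distinct edges incident to $N[\leaf_0]$ (distinctness uses $z_0\notin V(\cC)$ and $z_0\ne w_i^S$), giving $e_i^S(\wS)\ge\s+2$, and $k_i^S(\wS)$ is at least the number of pendants other than $\leaf_0$. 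Verifying these two computations is routine from the structural claim.

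With these in hand I would dispatch the cases: (5) and (6) directly, playing $w_i^D$ (or, in~(5), the endpoint of the edge $\cC$ other than $\leaf_0$), the ``$v_i^S(\wS)=1$, $k_i^S(\wS)=-1$'' branch of~(6) being exactly the case where $w_i^D$ is already isolated in $F_{i-1}^D$; (4) by playing any leaf of $\cC$ other than $\leaf_0$, whose neighbour has degree $\ge2$ because $\cC$ is a tree on $\ge3$ vertices; (2) by Move~P, since $w_i^D$ has three pendants and at most one is $\leaf_0$; and (1) and (3) by Move~P when neither of the two pendants of $w_i^D$ equals $\leaf_0$, falling back to Move~Q on $\leaf_0$ otherwise. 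I expect the only part needing genuine care to be the ``collision'' sub-cases of~(1) and~(3): when the pendant one would like to play turns out to be $\leaf_0$, one must check that Move~Q still isolates a vertex (which forces $w_i^D$ to retain a second pendant, and this is precisely why two pendants are what the statement demands) and that the edge count jumps from $\s$ to $\s+2$ — to $4$ in~(1) and $5$ in~(3) — thanks to the two edges supplied by the attachment at $z_0$. Everything else reduces to counting degrees.
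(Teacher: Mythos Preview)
Your proof is correct and follows essentially the same approach as the paper: in each case you play a well-chosen neighbour of $w_i^D$ (or $w_i^D$ itself), and the justifications all reduce to the acyclicity of $F_{i-1}^D$. The difference is purely organisational: the paper argues case by case, while you first isolate the uniform structural fact that $\cC$ has at most one attachment edge to $N[w_i^S]$ (this is exactly what the paper invokes implicitly in each case when it says ``at most one of the $y_j$ has another neighbour''), and then package the two recurring moves as templates P and Q, so that cases~(1)--(3) become a one-line dispatch and the ``collision'' sub-cases of~(1) and~(3) are handled uniformly by Move~Q rather than ad hoc.
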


\begin{proof}
In each case, let $y_1, \dots, y_{\s}$ be the neighbours of $w_i^D$ in $F_i^{S}$.
\begin{enumerate}
    \item In this case, observe that in the forest $F_{i-1}^D$, we must be in one of those two subcases (up to a permutation of the vertices $y_1$ and $y_2$): either both $y_1$ and $y_2$ have exactly one neighbour or $y_1$ has exactly one neighbour $w_i^D$ and $y_2$ has exactly two neighbours $w_i^D$ and $x$, such that $x$ is a neighbour of $w_i^S$.
    
    In the first subcase, $\wS=y_2$ was a possible Sweller's $i$-th move removing $2$ vertices $y_2$ and $w_i^D$, removing at least the $2$ edges $y_2w_i^D$ and $y_1w_i^D$ and making at least the vertex $y_1$ isolated.
    
    In the second subcase, $\wS=y_2$ was a possible Sweller's $i$-th move removing $3$ vertices $x$, $y_2$ and $w_i^D$, removing at least $4$ edges $xw_i^S$, $xy_2$, $y_2w_i^D$ and $y_1w_i^D$ and making at least the vertex $y_1$ isolated.
    
    \item In this case, observe that in $F_{i-1}^D$, at most one of $y_1$, $y_2$ and $y_3$ has another neighbour, say $y_3$. Then $\wS=y_1$ was a possible Sweller's $i$-th move removing $2$ vertices $y_1$ and $w_i^D$, removing at least $3$ edges $y_1w_i^D$, $y_2w_i^D$ and $y_3w_i^D$ and making at least the vertex $y_2$ isolated.
    
    \item In this case, observe that in $F_{i-1}^D$, at least one of $y_1$, $y_2$ and $y_3$ has exactly one neighbour, say $y_1$. Then the vertex $y_1$ was a possible Sweller's $i$-th move removing $2$ vertices $y_1$ and $w_i^D$ and removing at least $3$ edges $w_i^Dy_1$, $w_i^Dy_2$ and $w_i^Dy_3$. If $k_i^S(y_1) \geq 1$, set $\wS=y_1$.
    
    Now, suppose $k_i^S(y_1)=0$. Then each of $y_2, y_3$ has exactly one neighbour other than $w_i^D$ in $F_{i-1}^D$, and consequently exactly one of $y_2, y_3$, say $y_3$, has a common neighbour $x$ with $w_i^S$ in $F_{i-1}^D$. Then $\wS=y_3$ was a possible Sweller's $i$-th move, removing $3$ vertices $w_i^D, y_3, x$, removing at least $5$ edges $w_i^Dy_1, w_i^Dy_2, w_i^Dy_3, y_3x$ and $xw_i^S$ and making at least the vertex $y_1$ isolated.
    
    \item In this case, observe that there exists a leaf $\leaf$ in $F_{i-1}^D$ such that its neighbour has degree at least $2$. Then $\wS=\leaf$ was a possible Sweller's $i$-th move removing $2$ vertices and at least $2$ edges.
    
    \item In this case, let $\leaf$ be a leaf of $F_{i-1}^D$. Then $\wS=\leaf$ was a possible Sweller's $i$-th move removing $2$ vertices and at least $1$ edge.
    
    \item In this case, $F_{i-1}^D$ contains either a leaf $\leaf$ or an isolated vertex $\iso$.  Then it is straightforward to check that $\wS=\leaf$ leads to the first outcome and $\wS=\iso$ leads to the second.
\end{enumerate}
\end{proof}

All that remains to prove the key lemma is a matter of casework and simple calculations, the details of which can be found in \Cref{appendix}.

\subsection{Bounding the number of moves}

We now prove \Cref{linearLB} using \Cref{StrategyS}. Recall that $\N$ is the total number of moves played by both players. Since the number of vertices removed throughout the game is exactly the number $n$ of vertices in $F$, we have
\begin{align}\label{eq:EqVertices}
     n=\sum_{i=1}^{\lceil N/2 \rceil} v_i^S(w_i^S)+\sum_{i=1}^{\lfloor N/2 \rfloor} v_i^D(w_i^D) = \sum_{i=1}^{r} v_i + \mathbbm{1}_{\{2\nmid N\}}\cdot v_{\R+1}^S(w_{\R+1}^S).
\end{align}

Similarly, the number of edges removed throughout the game is exactly the number of edges in $F$, hence we have
\begin{align}\label{eq:EqEdges}
    n-C=\sum_{i=1}^{\lceil N/2 \rceil} e_i^S(w_i^S)+\sum_{i=1}^{\lfloor N/2 \rfloor} e_i^D(w_i^D) = \sum_{i=1}^{\R} e_i + \mathbbm{1}_{\{2\nmid N\}}\cdot e_{\R+1}^S(w_{\R+1}^S).
\end{align}

Moreover, as $k_i$ is the increment of the number of isolated vertices, and that there are no isolated vertices once the game ends, we have
\begin{align}\label{eq:IneqIso}
    0\geq\sum_{i=1}^{\lceil N/2 \rceil} k_i^S(w_i^S)+\sum_{i=1}^{\lfloor N/2 \rfloor} k_i^D(w_i^D) = \sum_{i=1}^{\R} k_i + \mathbbm{1}_{\{2\nmid N\}}\cdot k_{\R+1}^S(w_{\R+1}^S).
\end{align}

Using \Cref{StrategyS} and then plugging \eqref{eq:EqVertices}, \eqref{eq:EqEdges} and \eqref{eq:IneqIso} gives

\[\begin{aligned}
N &= \sum_{i=1}^{\lfloor N/2 \rfloor} 2 + \mathbbm{1}_{\{2\nmid N\}} \\
&\geq \frac{1}{\beta}(\sum_{i=1}^{\R} (1-\beta)k_i+v_i-\alpha e_i) + \frac{1}{\beta}\mathbbm{1}_{\{2\nmid N\}}\cdot ((1-\beta)k_{\R+1}^S(w_i^S)+v_{\R+1}^S(w_i^S)-\alpha e_{\R+1}^S(w_i^S))\\
&\geq \frac{1-\alpha}{\beta}n+\frac{\alpha C}{\beta},
\end{aligned}\]
hence, $N \geq \frac{5n+3C}{13}$ as wanted.\qed

\section{Concluding remarks and further directions}

The following question remains unsolved.

\begin{Question}\label{QuestionOptimalConstant}
What is the value of $c=\inf \{ \dfrac{I_s(T)}{|V(T)|}: \text{T is a tree}\}$?
\end{Question}

In this paper, we showed that we have $5/13 \leq c \leq 5/12$. The values of the constants $\alpha$ and $\beta$ have been optimised but the authors believe that with the same method and a more careful case analysis, it is likely that one can improve \Cref{linearLB} to a lower bound $c'n$ for some $c' > 5/13$. However, it does not seem that the technique developped here could lead to anything better than a lower bound of $2n/5$ for the following reason. With the notations from the previous section, if Diminisher's first move has $\s=\E=n/5$, then the best we can ensure with the reasoning we had above is that Sweller's previous move removed constantly many vertices and roughly $n/5$ edges from the graph. Then our technique cannot rule out that the resulting graph is now roughly a disjoint union of $2n/5$ $K_2$'s, and consequently that the game will last roughly $2n/5$ more moves. Hence, if it turns out that $c>2/5$ it seems necessary that a proof of such a lower bound will require analysis of the general structure of the graph and not just of the local structure around Diminisher's moves.

Note that one can ask \Cref{QuestionOptimalConstant} for forests instead of trees as it is not clear whether the two problems have the same answer.

The authors would also like to point out that the following conjecture posed by Goddard and Henning \cite{goddard2018competition} is yet to be settled.

\begin{conjecture}
Let $T$ be a tree of order $n \geq 2$, then $I_d(T) \leq \frac{3}{4}n$.
\end{conjecture}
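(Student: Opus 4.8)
The plan is to dualize the greedy-plus-potential machinery of \Cref{linearLBForest}, now constructing a strategy for Diminisher, who moves first, that forces the game to be short. Exactly as in \eqref{eq:EqVertices}--\eqref{eq:IneqIso}, over the whole game one has $\sum v_i=n$, $\sum e_i=n-1$ and $\sum k_i=0$ (a tree is connected and the board is free of isolated vertices both at the start and at the end). Fixing parameters $\alpha,\gamma>0$, assign to a move that plays a vertex $u$ the quantity $m(u)=v(u)-\alpha\,e(u)-\gamma\,k(u)$, where $v,e,k$ are the numbers of removed vertices, removed edges, and the increment in the number of isolated vertices. Summing over all moves telescopes to the fixed value $M=(1-\alpha)n+\alpha$. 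The asymmetry with the lower bound is essential: there the per-round \emph{upper} bound $m_i\le 2\beta$ converts a fixed sum into a \emph{lower} bound on $N$, whereas here we must establish a per-round \emph{lower} bound $m_i\ge 2\mu$, which yields $\sum_i m_i\ge \mu N$ and hence $N\le M/\mu=\tfrac{(1-\alpha)n+\alpha}{\mu}$; tuning $\alpha,\gamma,\mu$ so that $\tfrac{1-\alpha}{\mu}\le\tfrac34$ then gives $I_d(T)\le\tfrac34 n$ up to the additive term, with small $n$ (using $n\ge 2$) checked separately.

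First I would fix Diminisher's greedy rule: on each of his moves he picks a vertex \emph{maximising} the analogue $m_i^D$, a rule that rewards removing many vertices while penalising both the loss of edges and the creation of isolated vertices; a natural concrete instantiation is to play a support vertex (a neighbour of a leaf) of large degree, which clears a leaf cheaply while keeping future neighbourhoods large. The heart of the argument would be a per-round lemma that is the mirror image of \Cref{keylemma} and \Cref{SpecialCases}: pairing Diminisher's $i$-th move $w_i^D$ with Sweller's reply $w_i^S$, show that the round total $m_i^D(w_i^D)+m_i^S(w_i^S)\ge 2\mu$. Since Diminisher maximises, it suffices to exhibit a single good vertex for him, and the casework would again branch on the local parameters $\s$ and $\E$ around the vertex he targets. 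As in the ``$N$ odd'' clause of \Cref{StrategyS}, a separate endgame estimate for the final unpaired Diminisher move (when $N$ is odd) would be needed.

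The main obstacle is precisely the reversed turn order. In \Cref{linearLBForest} the controlled player, Sweller, moves \emph{second} within a round and can therefore always answer ``near'' $w_i^D$; here Diminisher moves \emph{first}, so within a round Sweller has the last word and can deliberately play a vertex that isolates many others, forcing $k_i>0$ and driving $m_i$ below $2\mu$. Thus the clean per-round bound cannot hold verbatim, and this is exactly why the constant $\tfrac34$ is not immediate. The fix I would pursue is twofold: (i) have Diminisher's move preemptively destroy the configurations Sweller would exploit -- long pendant paths and support vertices all of whose non-leaf neighbours have degree two -- so that after $w_i^D$ no cheap manufacture of isolated vertices remains; and (ii) replace the strict per-round inequality by an amortised one, charging each isolated vertex created by Sweller to the \emph{later} move that removes it, where $k=-1$ contributes a surplus of $+\gamma$ to $m$. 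Because $\sum k=0$, these charges cancel globally, and the real work is to redistribute them so that every round nets at least $2\mu$; this amortisation across non-adjacent rounds is where I expect any genuine difficulty to reside.

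Finally I would calibrate $\alpha,\gamma,\mu$ against the presumptive extremal trees to fix the constants in the casework. I expect combs (the corona $P_k\circ K_1$) and the two-sided spiders-of-$P_3$'s used for \Cref{Example512} to be the tight or near-tight instances for $I_d$, and matching the strategy's guarantee to the game length on these families should either pin down parameters giving $M/\mu=\tfrac34 n$ or reveal that the true constant is strictly smaller, in which case the same framework would already yield a bound below $\tfrac34$. In the same spirit as the concluding remark for the lower bound, one should be alert that an analysis based purely on the local structure around Diminisher's moves may fail to capture configurations in which Sweller repeatedly fragments the forest; overcoming step~(ii) -- and thereby controlling isolated vertices globally rather than round by round -- is the step I expect to determine whether $\tfrac34$ is attainable by this method.
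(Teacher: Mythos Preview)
The statement you are attempting to prove is not proved in the paper. It appears there only as an open conjecture of Goddard and Henning, introduced with the explicit remark that it ``is yet to be settled.'' There is therefore no proof in the paper against which to compare your proposal.

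As for the proposal itself, it is not a proof but a research programme. You outline how one might try to dualise the potential-function argument behind \Cref{linearLBForest}, you correctly identify the structural obstruction (in the Diminisher-start game the adversary Sweller has the last word within each round and can deliberately manufacture isolated vertices, pushing the per-round potential the wrong way), and you candidly flag the amortisation across rounds as the place ``where I expect any genuine difficulty to reside.'' That step is the whole problem. No per-round inequality $m_i\ge 2\mu$ is actually established anywhere; the parameters $\alpha,\gamma,\mu$ are left to be ``calibrated'' against conjectural extremal families; and the preemptive-destruction heuristic in your step~(i) is not shown to be realisable by any concrete greedy objective. In short, your write-up accurately diagnoses why the method of \Cref{linearLBForest} does not transfer directly to an upper bound for $I_d$, but it does not supply the missing idea, and the conjecture remains open.
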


\section*{Acknowledgement}

The authors would like to thank their PhD supervisor Béla Bollobás for his valuable comments.

\bibliographystyle{abbrvnat}  
\renewcommand{\bibname}{Bibliography}
\bibliography{bibliography}

\appendix
\section{Proof of \Cref{StrategyS}}
\label{appendix}

This result will mainly follow from \Cref{keylemma} and \Cref{copymove}, except for a few small cases that we will treat using \Cref{SpecialCases}.

First, we show $2\beta \geq (1-\beta)k_i+v_i-\alpha e_i$ for all $i \leq \R$. Suppose $v_i^D(w_i^D)=1+\s$ and $e_i^D(w_i^D)=\s+\E$. We split all possible pairs of $(\s,\E)$ as follows (see \Cref{cases} for illustration):

\begin{figure}[htbp]\centering
    			\includegraphics[height=8cm]{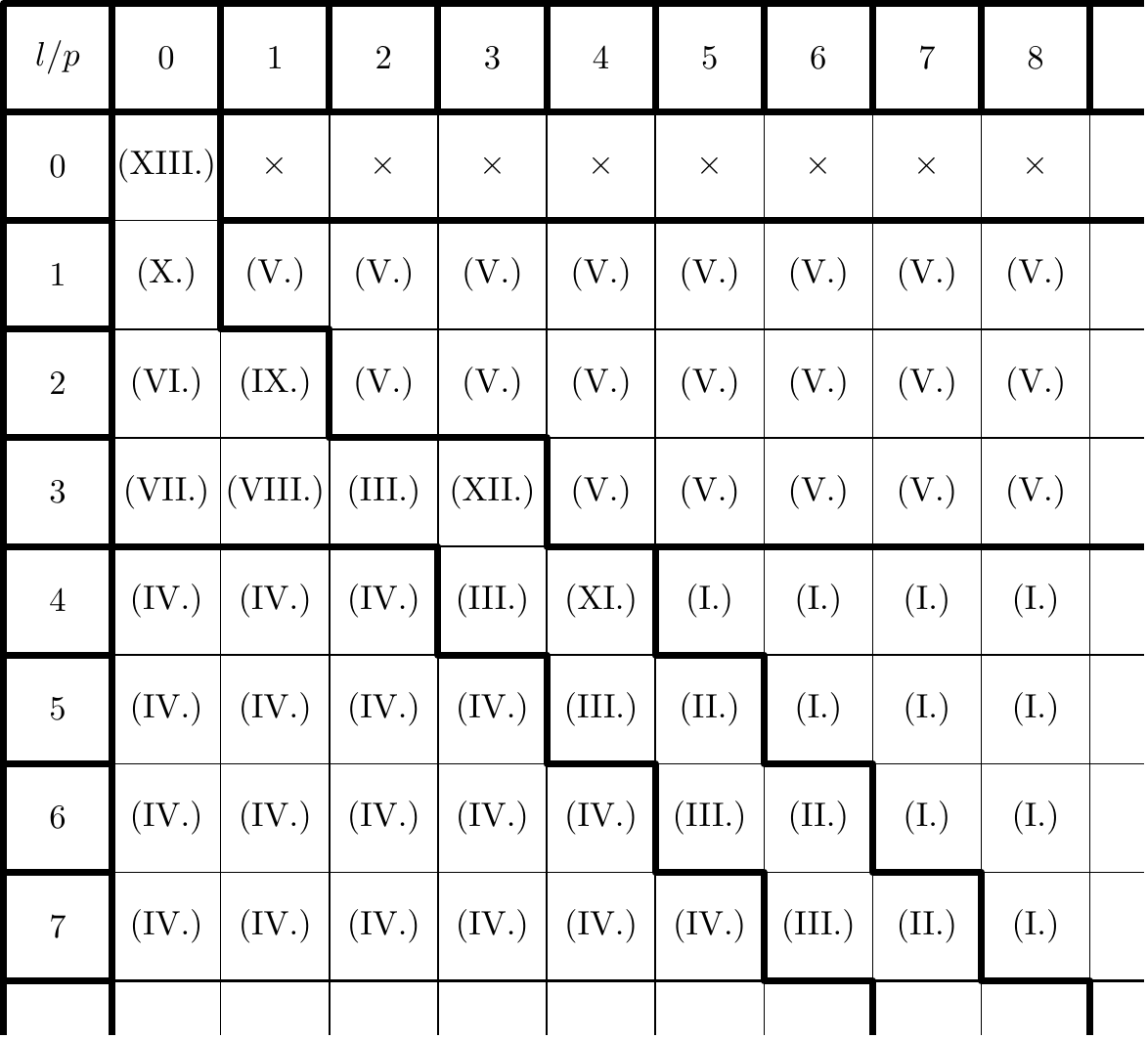}
    			\caption{An illustration of how cases (I.)--(XIII.) cover all pairs $(\s,\E)$.}
\label{cases}
\end{figure}

\begin{enumerate}[(I.)]
    \item First, suppose $\E \geq \s+1$ and $\s \geq 4$. By \Cref{keylemma}, we have:
\begin{itemize}
    \item either $(1-\beta)k_i+v_i-\alpha e_i \leq (\q+2+\s+1)-\frac{3}{8}(\q+\s+\s+\E),$
    \item or $(1-\beta)k_i+v_i-\alpha e_i \leq (\q+3+\s+1)-\frac{3}{8}(\q+\s+2+\s+\E).$
\end{itemize}

Note that $(\q+2+\s+1)-\frac{3}{8}(\q+\s+\s+\E) \leq (\q+3+\s+1)-\frac{3}{8}(\q+\s+2+\s+\E)$, so it suffices to prove that $(\q+3+\s+1)-\frac{3}{8}(\q+\s+2+\s+\E) \leq 2\beta=\frac{13}{4}$, which is indeed true:

\[\begin{aligned}
\frac{13}{4}-((\q+3+\s+1)-\frac{3}{8}(\q+\s+2+\s+\E)) &=\frac{3}{8}\E-\frac{1}{4}\s-\frac{5}{8}\q \\
&\geq \frac{3}{8}\E-\frac{1}{4}\s-\frac{5\E}{8\s}. \\
\end{aligned}\]

Using the assumptions $\E \geq \s+1$ and $\s \geq 4$, we get:

\[\begin{aligned}
\frac{3}{8}\E-\frac{1}{4}\s-\frac{5\E}{8\s} &\geq (\s+1)\frac{3\s-5}{8\s} - \frac{1}{4}\s \\
&= \frac{\s^2-2\s-5}{8\s} \\
&\geq 0. \\
\end{aligned}\]

\item Suppose $\E = \s$ and $\s \geq 5$. This is very similar to the previous case; it suffices to prove that $(\q+3+\s+1)-\frac{3}{8}(\q+\s+2+\s+\E) \leq 2\beta=\frac{13}{4}$, which is indeed true:

\[\begin{aligned}
\frac{13}{4}-((\q+3+\s+1)-\frac{3}{8}(\q+\s+2+\s+\E)) &=\frac{3}{8}\E-\frac{1}{4}\s-\frac{5}{8}\q \\
&\geq \frac{3}{8}\E-\frac{1}{4}\s-\frac{5}{8} \\
&=\frac{\s-5}{8} \\
&\geq 0. \\
\end{aligned}\]

\item Suppose $\E=\s-1$ and $\s \geq 3$. Note that $\q=0$, and that as in the previous case, it suffices to prove that $(\q+3+\s+1)-\frac{3}{8}(\q+\s+2+\s+\E) \leq 2\beta=\frac{13}{4}$, which is indeed true:

\[\begin{aligned}
\frac{13}{4}-((\q+3+\s+1)-\frac{3}{8}(\q+\s+2+\s+\E)) &=\frac{3}{8}\E-\frac{1}{4}\s \\
&=\frac{\s-3}{8} \\
&\geq 0. \\
\end{aligned}\]

\item Suppose $\E \leq \s-2$ and $\s \geq 4$. Note that $\q=0$ and by \Cref{keylemma}, we have:
\begin{itemize}
    \item either $(1-\beta)k_i+v_i-\alpha e_i \leq -\frac{5}{8}(\s-2-\E)+(2+\s+1)-\frac{3}{8}(\s+\s+\E),$
    \item or $(1-\beta)k_i+v_i-\alpha e_i \leq -\frac{5}{8}(\s-1-\E)+(3+\s+1)-\frac{3}{8}(\s+2+\s+\E).$
\end{itemize}

Note that $-\frac{5}{8}(\s-2-\E)+(2+\s+1)-\frac{3}{8}(\s+\s+\E) \geq -\frac{5}{8}(\s-1-\E)+(3+\s+1)-\frac{3}{8}(\s+2+\s+\E)$ so it suffices to prove that $-\frac{5}{8}(\s-2-\E)+(2+\s+1)-\frac{3}{8}(\s+\s+\E) \leq 2\beta=\frac{13}{4}$, which is indeed true:

\[\begin{aligned}
\frac{13}{4}-(-\frac{5}{8}(\s-2-\E)+(2+\s+1)-\frac{3}{8}(\s+\s+\E)) &= -1 - \frac{1}{4}\E + \frac{3}{8}\s \\
&\geq  -1 -\frac{1}{4}(\s-2)+\frac{3}{8}\s \\
&= \frac{\s-4}{8} \\
&\geq 0. \\
\end{aligned}\]

\item Suppose $5\s-3\E \leq 4$ and $l \geq 1$, then by \Cref{copymove}, we have: \begin{itemize}
    \item either $(1-\beta)k_i+v_i-\alpha e_i \leq (2+2\s)-\frac{3}{8}(2\s+2\E),$
    \item or $(1-\beta)k_i+v_i-\alpha e_i \leq (3+2\s)-\frac{3}{8}(2\s+2\E+2).$
\end{itemize}

Note that $(3+2\s)-\frac{3}{8}(2\s+2\E+2) \geq (2+2\s)-\frac{3}{8}(2\s+2\E)$ so it suffices to prove that $(3+2\s)-\frac{3}{8}(2\s+2\E+2) \leq 2\beta=\frac{13}{4}$, which is indeed true:

\[\begin{aligned}
\frac{13}{4}-((3+2\s)-\frac{3}{8}(2\s+2\E+2)) = \frac{4-5\s+3\E}{4} \geq 0. \\
\end{aligned}\]

\item If $\s=2$ and $\E=0$, then by \Cref{SpecialCases}, we have either $(1-\beta)k_i+v_i-\alpha e_i \leq (1-\beta)+5-4\alpha= \frac{23}{8} \leq 2\beta$ or $(1-\beta)k_i+v_i-\alpha e_i \leq (1-\beta)+6-6\alpha= \frac{25}{8} \leq 2\beta$.
\item If $\s=3$ and $\E=0$, then by \Cref{SpecialCases}, we have $(1-\beta)k_i+v_i-\alpha e_i \leq (1-\beta)+6-6\alpha = \frac{25}{8} \leq 2\beta$.
\item If $\s=3$ and $\E=1$, then by \Cref{SpecialCases}, we have either $(1-\beta)k_i+v_i-\alpha e_i \leq (1-\beta)+6-7\alpha = \frac{22}{8} \leq 2\beta$ or $(1-\beta)k_i+v_i-\alpha e_i \leq (1-\beta)+7-9\alpha = 3 \leq 2\beta$.
\item If $\s=2$ and $\E=1$, then by \Cref{SpecialCases}, we have $(1-\beta)k_i+v_i-\alpha e_i \leq 5-5\alpha = \frac{25}{8} \leq 2\beta$.
\item If $\s=1$ and $\E=0$, then by \Cref{SpecialCases}, we have $(1-\beta)k_i+v_i-\alpha e_i \leq 4-2\alpha = \frac{13}{4} = 2\beta$.
\item If $\s=4$ and $\E=4$, then by \Cref{SpecialCases}, we have $(1-\beta)k_i+v_i-\alpha e_i \leq 7-10\alpha = \frac{13}{4} = 2\beta$.
\item If $\s=3$ and $\E=3$, then by \Cref{SpecialCases}, we have $(1-\beta)k_i+v_i-\alpha e_i \leq 6-8\alpha = 3 \leq 2\beta$.
\item If $\s=0$ and $\E=0$, then by \Cref{SpecialCases}, we have either $(1-\beta)k_i+v_i-\alpha e_i \leq 3-\alpha = \frac{21}{8} \leq 2\beta$, or $(1-\beta)k_i+v_i-\alpha e_i \leq -(1-\beta)+2 = \frac{21}{8} \leq 2\beta$.
\end{enumerate}

This concludes the proof of the first claim.

For the second claim, suppose $\N$ is odd. Note that in $F_{\R+1}^D$ there is a leaf $\leaf$ or an isolated vertex $\iso$. Set $\hat{w}_{\R+1}^S$ to be either $\leaf$ or $\iso$, whichever is present. The first case leads to removing $2$ vertices and at least $1$ edge, the second leads to removing $1$ vertex, $0$ edges and $1$ isolated vertex. Hence, $\beta \geq m_{\R+1}^S(\hat{w}_{\R+1}^S) \geq m_{\R+1}^S(w_{\R+1}^S)$, as wanted. \qed

\end{document}